\documentclass{amsart}

\usepackage[normalem]{ulem}

\usepackage{lineno}

\usepackage{soul}

\usepackage{amssymb, amsmath, amsthm, hyperref, euscript, color}
\usepackage[dvipsnames]{xcolor}

\usepackage{amscd}

\usepackage{bbm}

\usepackage{enumitem}

\usepackage[english]{babel}

\numberwithin{equation}{section}

\theoremstyle{plain}

\newtheorem{theorem}{Theorem}

\newtheorem{corollary}{Corollary}

\newtheorem{proposition}{Proposition}

\newtheorem{remark}{Remark}

\newtheorem{thm}{Theorem}

\newtheorem{cor}[thm]{Corollary}

\theoremstyle{definition}

\newtheorem{definition}{Definition}

\newcommand{\R}{\mathbb{R}}

\newcommand{\Z}{\mathbb{Z}}

\newcommand{\N}{\mathbb{N}}

\newcommand{\C}{\mathbb{C}}

\newcommand{\1}{\quad |\quad}

\DeclareMathOperator{\Mod}{mod}
\DeclareMathOperator{\Dim}{dim}

\DeclareMathOperator{\Su}{SU}
\DeclareMathOperator{\OO}{O}
\DeclareMathOperator{\So}{SO}
\DeclareMathOperator{\Gl}{GL}

\begin{document}

\author{Marko Slapar and Rafael Torres}

\title[Totally real immersions/embeddings into $\C^N$]{Existence results of totally real immersions and embeddings into $\C^N$}

\address{Faculty of Education, University of Ljubljana\\Kardeljeva Po\u{s}\u{c}ad 16\\1000\\Ljubljana\\Slovenia}
\address{Faculty of Mathematics and Physics, University of Ljubljana\\Jadranska 19\\1000\\Ljubljana\\Slovenia}
\address{Institute of Mathematics, Physics and Mechanics\\Jadranska 19\\1000\\Ljubljana\\Slovenia}

\email{marko.slapar@pef.uni-lj.si}

\address{Scuola Internazionale Superiori di Studi Avanzati (SISSA)\\ Via Bonomea 265\\34136\\Trieste\\Italy}

\email{rtorres@sissa.it}

\subjclass[2010]{57R42, 32Q99}

\maketitle

\emph{Abstract:} We prove that the existence of totally real immersions of manifolds is a closed property under cut-and-paste constructions along submanifolds including connected sums.  We study the existence of totally real embeddings for simply connected 5-manifolds and orientable 6-manifolds and determine the diffeomorphism and homotopy types. We show that the fundamental group is not an obstruction for the existence of a totally real embedding for high-dimensional manifolds in contrast with the situation in dimension four.

\section{Introduction and main results}

In this paper, we are interested in the following kind of maps.

\begin{definition}\label{Definition TotallyReal} Let $M^N$ be a closed smooth $N$-manifold and let $J$ denote the standard complex structure on the tangent bundle of $\C^N$. An immersion $M^N\rightarrow \C^N$ is totally real if if the tangent bundle $TM^N$ contains no complex lines, i.e., if\begin{equation}\label{Totally}TM^N\cap JTM^N = \{0\}\end{equation} at all points of $TM^N$. An embedding $M^N \hookrightarrow \C^N$ that satisfies (\ref{Totally}) is called a totally real embedding.
\end{definition}

A canonical problem is to distinguish between those manifolds that admit totally real immersions and the ones who admit totally real embeddings. This dichotomy is already somewhat interesting in the case of manifolds with simple topology. For example, every $N$-sphere $S^N$ admits a totally real immersion into $\C^N$, yet no totally real embedding exists if $N > 3$; see Gromov \cite[p. 193]{[Gromov2]}, Stout-Zame \cite{[StoutZame]}. We occupy ourselves with the study of the distinction of the maps of Definition \ref{Definition TotallyReal} for a large class of manifolds in this paper. Necessary and sufficient topological conditions for the existence of totally real immersions and embeddings have been studied by Gromov \cite{[Gromov1]}, Wells \cite{[Wells]}, Audin \cite{[Audin]}, Forstneri\u{c} \cite{[Forstneric]}, Gong \cite{[Gong]} and Jacobowitz-Landweber \cite{[JL]} among several other mathematicians (see Section \ref{Section TRI}).

Our first two theorems state that the existence of a totally real immersion is a property which is closed under certain fundamental cut-and-paste constructions of manifolds along submanifolds using a trivial framing; please see Remark \ref{Remark Framing}. The reader is directed to \cite[Section 2]{[MilnorKervaire]} and \cite[Section 1]{[Milnor]} for the precise definitions of the cut-and-paste operations that we use in this paper. 

\begin{thm}\label{Theorem Surgeries}Let $M^N$ be a closed smooth $N$-manifold that admits a totally real immersion into $\C^N$ and let $\imath: S^p\times D^{N - p}\hookrightarrow M^N$ be a smooth embedding for $0\leq p \leq 2$. The $N$-manifold\begin{equation}\hat{M}^N:= M^N \backslash \imath(S^p\times D^{N - p})\cup (D^{p + 1}\times S^{N - p - 1})\end{equation}that is obtained by performing surgery along $\imath(S^p\times \{0\})$ admits a totally real immersion $\hat{M}^N\rightarrow \C^N$.
\end{thm}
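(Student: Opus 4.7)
The plan is to reduce the statement to a bundle-theoretic problem by invoking the h-principle for totally real immersions (as reviewed in Section \ref{Section TRI}): a closed smooth $N$-manifold $M$ admits a totally real immersion into $\C^N$ if and only if the complexified tangent bundle $TM \otimes_\R \C$ is trivial as a rank $N$ complex vector bundle. It therefore suffices to show that triviality of $TM \otimes \C$ is preserved under $p$-surgery for $0 \le p \le 2$.

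The natural setting to compare the tangent bundles of $M$ and $\hat{M}$ is the trace $W^{N+1}$ of the surgery: a cobordism with $\partial W = M \sqcup \hat{M}$ obtained from $M\times [0,1]$ by attaching a single $(p+1)$-handle along $\imath$. Consequently $W$ deformation retracts onto $M \cup_{\imath_0} D^{p+1}$, where $\imath_0\colon S^p \to M$ is the attaching sphere, so the relative cohomology satisfies $H^{i}(W, M;G) \cong \tilde H^{i}(S^{p+1};G)$. Since $M$ and $\hat{M}$ sit in $W$ as boundary components, $TW|_M \cong TM \oplus \underline{\R}$ and $TW|_{\hat{M}} \cong T\hat{M} \oplus \underline{\R}$; complexifying, $(TW \otimes \C)|_M \cong (TM \otimes \C) \oplus \underline{\C}$ is trivial by hypothesis.

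The central step is to extend this trivialization to a trivialization of $TW \otimes \C$ over $W$. Standard obstruction theory places the obstructions in $H^{i+1}(W, M; \pi_i(U(N+1)))$, so only $i = p$ can contribute, yielding a single obstruction class in $\pi_p(U(N+1))$. For $p = 0$ and $p = 2$, the group vanishes and the extension is automatic. The crux is $p = 1$, where $\pi_1(U(N+1)) \cong \Z$ is detected by a first Chern class: the obstruction class in $H^2(W, M;\Z)$ maps, through the long exact sequence of the pair, to $c_1(TW \otimes \C) \in H^2(W;\Z)$, which vanishes because $TW \otimes \C$ is the complexification of a real vector bundle and therefore has vanishing odd Chern classes. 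By exactness the obstruction lies in the image of $H^1(M;\Z) \to H^2(W, M;\Z)$, and this image is precisely what can be realized by modifying the chosen trivialization on $M$ through maps $M \to U(1) \hookrightarrow U(N+1)$, killing the obstruction.

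Having trivialized $TW \otimes \C$ over $W$, restricting to $\hat{M}$ makes $(T\hat{M} \otimes \C) \oplus \underline{\C}$ trivial, so $T\hat{M} \otimes \C$ is stably trivial. The final obstruction to reducing this to genuine triviality for a rank $N$ complex bundle lives in $H^{2N}(\hat{M};\Z)$, which vanishes for dimensional reasons, hence $T\hat{M} \otimes \C$ is trivial and a last appeal to the h-principle produces the desired totally real immersion. The main obstacle is the case $p = 1$, overcome by the vanishing of the first Chern class of any complexification; for $p \ge 3$ the analogous obstruction would involve $c_2$ or higher Chern classes, which need not vanish on a complexification, matching the hypothesis $0 \le p \le 2$.
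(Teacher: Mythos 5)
Your overall strategy is the same as the paper's: reduce via the Wells--Gromov h-principle to showing $\C T\hat M$ is trivial, pass through the trace $W$ of the surgery, trivialize $\C TW$, restrict to $\hat M$, and finish with the stably-trivial-implies-trivial argument in the critical rank. The implementation differs: the paper explicitly extends a trivializing frame over the core of the handle, making crucial use of $\pi_2(\Gl(3,\C))=\pi_2(\OO(N-2))=0$ (and, for $p=0,1$, of a choice of framing, see their Remark \ref{Remark Framing}); you instead run abstract obstruction theory with coefficients $\pi_p(U(N+1))$. Your version is arguably cleaner for $p=0,2$, since the complex obstruction groups $\pi_0(U)$ and $\pi_2(U)$ vanish outright, whereas the paper's hands-on normal-bundle argument has to confront the $\Z/2$ in $\pi_0(\OO(N))$ and $\pi_1(\OO(N-1))$ and is forced to single out a framing for $p\le 1$.

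The weak point is the $p=1$ step. You kill the obstruction by asserting that $c_1(TW\otimes\C)=0$ ``because $TW\otimes\C$ is the complexification of a real vector bundle and therefore has vanishing odd Chern classes.'' That reasoning is not correct in general: conjugation-invariance of a complexification gives only $2c_k(E\otimes\C)=0$ for $k$ odd, i.e.\ these classes are $2$-torsion, not zero (for a real bundle $E$ one has $c_1(E\otimes\C)=\beta(w_1(E))$, the integral Bockstein of $w_1$). Since Theorem \ref{Theorem Surgeries} does not assume $M$ orientable, you cannot conclude $c_1(TW\otimes\C)=0$ without further argument. What does hold is: if $M$ (and hence $W$) is orientable, then $TW$ has structure group $\So(N+1)\subset \Su(N+1)$, forcing $c_1(TW\otimes\C)=0$; this is presumably the case you have in mind, and it suffices for all the applications in the paper. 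To make your argument match the generality of the statement you would need to either add an orientability hypothesis, or show directly that for $p=1$ the obstruction in $H^2(W,M;\Z)\cong\Z$ can always be normalized away (which is essentially what the paper's framing discussion is doing on the real side). As written, this step is a genuine gap; the rest of the argument, including the reduction from stable to honest triviality of the rank-$N$ complex bundle over the $N$-complex $\hat M$, is sound and matches the paper.
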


\begin{thm}\label{Theorem ConnectedSums}Let $M_1^N$ and $M_2^N$ be closed smooth oriented $N$-manifolds that admit a totally real immersion into $\C^N$. There is a totally real immersion $M_1^N\#\overline{M_2^N}\rightarrow \C^N$.

\end{thm}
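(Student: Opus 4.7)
The plan is to deduce Theorem \ref{Theorem ConnectedSums} from Theorem \ref{Theorem Surgeries} applied with $p=0$, by realizing the oriented connected sum as a $0$-surgery on the disjoint union.

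The first step is to produce a totally real immersion of the disjoint union $M_1^N \sqcup M_2^N$ into $\C^N$. Given totally real immersions $f_i: M_i^N \to \C^N$ for $i=1,2$, the map defined component-wise by $f_1$ on $M_1^N$ and $f_2$ on $M_2^N$ is automatically an immersion, and the totally real condition \eqref{Totally} is pointwise on the tangent bundle, so it continues to hold throughout $T(M_1^N \sqcup M_2^N)$. Note that overlaps of the two images inside $\C^N$ are harmless since we only require an immersion, not an embedding.

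The second step is to recall that the oriented connected sum $M_1^N \# \overline{M_2^N}$ is precisely the result of performing surgery along a $0$-sphere in the disjoint union: choose an embedding $\imath: S^0 \times D^N \hookrightarrow M_1^N \sqcup M_2^N$ whose restriction to $\{+1\} \times D^N$ lies in $M_1^N$ and whose restriction to $\{-1\} \times D^N$ lies in $M_2^N$, each orientation-preserving, i.e. with the trivial framing of Remark \ref{Remark Framing}. The standard orientation bookkeeping, as described in the references \cite{[MilnorKervaire]} and \cite{[Milnor]} already invoked in the paper, shows that the $1$-handle $D^1 \times S^{N-1}$ glues the two resulting boundary spheres with orientations such that the output is diffeomorphic to $M_1^N \# \overline{M_2^N}$; the conjugate orientation on the second summand is forced by the fact that a trivially framed embedding on both components of $S^0$ produces a tube whose two ends induce opposite orientations on the boundary spheres relative to those of $M_1^N$ and $M_2^N$.

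With these two ingredients, the conclusion follows at once: Theorem \ref{Theorem Surgeries} applied with $p=0$ to the immersion $f_1 \sqcup f_2$ and the embedding $\imath$ yields a totally real immersion of the surgered manifold, which is $M_1^N \# \overline{M_2^N}$. The only point requiring care is the orientation convention, namely verifying that the trivial framing used in Theorem \ref{Theorem Surgeries} is indeed the one that produces $M_1^N \# \overline{M_2^N}$ rather than $M_1^N \# M_2^N$; this is the expected main obstacle, but it is routine orientation bookkeeping rather than a substantive difficulty.
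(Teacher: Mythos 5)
Your proof is correct and follows the paper's own route: the paper deduces Theorem \ref{Theorem ConnectedSums} precisely by applying Theorem \ref{Theorem Surgeries} with $p=0$ (via Remark \ref{Remark Framing}) to the disjoint union $M_1^N \sqcup M_2^N$, exactly as you do. You usefully make explicit two points the paper leaves tacit, namely that the disjoint union of two totally real immersions is again a totally real immersion, and that the chosen framing on $S^0\times D^N$ is what accounts for the orientation reversal on the second summand.
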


A circle has a totally real embedding into $\C$ and every closed orientable surface admits a totally real immersion into $\C^2$, yet the 2-torus is the only orientable closed surface that admits a totally real embedding. Ahern-Rudin \cite{[AhernRudin]} constructed an explicit totally real embedding $S^3\hookrightarrow \C^3$ and Forstneri\u{c} showed that every closed orientable 3-manifold admits a totally real embedding into $\C^3$ \cite[1.4 Theorem]{[Forstneric]}. Jacobowitz-Landweber have shown that a necessary and sufficient condition for a closed smooth orientable 4-manifold $M^4$ to admit a totally real immersion into $\C^4$ is the vanishing of its first Pontrjagin class $p_1(M^4)$ \cite[Corollary 4.1]{[JL2]}. Our next result address the situation in dimension five using the classification of closed simply connected 5-manifolds of Barden \cite{[Barden]} and Smale \cite{[Smale]}. A smooth manifold $M^N$ is irreducible if for every connected sum decomposition $M^N = M_1\# M_2$, either $M_1$ or $M_2$ is diffeomorphic to the $n$-sphere. The nontrivial 3-sphere bundle over the 2-sphere is denoted by $S^3\widetilde{\times} S^2$.

\begin{thm}\label{Theorem 5D} Every closed smooth simply connected 5-manifold $M^5$ admits a totally real immersion\begin{equation}M^5\rightarrow \C^5.\end{equation} Let $M^5$ be an irreducible simply connected 5-manifold. There is a totally real embedding\begin{equation}M^5\hookrightarrow \C^5\end{equation}if and only if\begin{equation}M^5\in \{\Su(3)/\So(3), S^2\times S^3, S^3\widetilde{\times} S^2\}\end{equation} up to diffeomorphism.
\end{thm}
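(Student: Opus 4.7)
The plan separates into the universal immersion statement and the embedding classification; throughout I would exploit the Smale--Barden classification, which writes any closed simply connected $M^5$ as a connected sum of irreducible pieces drawn from the list $\{SU(3)/SO(3),\, S^2\times S^3,\, S^3\widetilde{\times}S^2,\, M_k\}$, where $M_k$ satisfies $H_2(M_k;\Z)=\Z/k\oplus\Z/k$ for $k$ a prime power.

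For the immersion statement, Theorem B reduces the problem to exhibiting a totally real immersion of each irreducible summand. The most efficient route is to invoke the h-principle of Gromov: a totally real immersion $M^N\to\C^N$ exists if and only if the complexified tangent bundle $TM^N\otimes_\R\C$ is trivial as a complex rank-$N$ bundle. For a closed simply connected 5-manifold, $H^2(M^5;\Z)$ is torsion-free, so the always $2$-torsion class $c_1(TM^5\otimes\C)$ automatically vanishes, while all higher Chern classes live in cohomology groups that vanish. Hence $TM^5\otimes\C$ is stably trivial, and since rank five lies in the stable range on a $5$-complex the bundle is unstably trivial as well. Alternatively, each irreducible summand is obtained from $S^5$ by a sequence of surgeries with $p\le 2$, so Theorem A applies directly.

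For the embedding classification, the existence direction is addressed by constructing totally real embeddings of the three listed manifolds. A totally real embedding $S^2\times S^3\hookrightarrow\C^2\times\C^3=\C^5$ is produced as the product of a standard totally real embedding $S^2\hookrightarrow\C^2$ with the Ahern--Rudin totally real embedding $S^3\hookrightarrow\C^3$, since the totally real condition on tangent spaces is preserved under such products. The twisted case $S^3\widetilde{\times}S^2$ is handled analogously by replacing the trivial product structure with the framing of the nontrivial $S^3$-bundle over $S^2$ while preserving fiberwise totally realness. For the Wu manifold $SU(3)/SO(3)$ I would construct a totally real embedding exploiting its symmetric space structure, for instance via the Cartan embedding $SU(3)/SO(3)\hookrightarrow SU(3)$ combined with a suitable polynomial presentation into $\C^5$.

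The main obstacle is non-existence for the remaining irreducible candidates, namely $S^5$ (already excluded by Stout--Zame) and the torsion pieces $M_k$. The strategy is to combine the bundle constraint that the normal bundle $\nu$ satisfies $\nu\cong TM^5$ as a real bundle (forced by multiplication by $J$ on any totally real embedding) with the existence of a Stein tubular neighborhood of $M^5$ in $\C^5$, and to extract from this structure a cohomological obstruction involving Barden's linking form on $\mathrm{Tors}\,H_2(M;\Z)$ together with the second Stiefel--Whitney class. This obstruction must be delicate enough to rule out the rank-two $\Z/k\oplus\Z/k$ torsion appearing in the $M_k$ while still vanishing on the rank-one $\Z/2$ torsion of $SU(3)/SO(3)$; isolating such an obstruction and verifying its compatibility with all three listed manifolds is the crux of the proof.
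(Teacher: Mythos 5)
Your immersion argument is sound and slightly different from the paper's: you deduce triviality of $TM^5\otimes\C$ directly from the vanishing of all Chern classes (using that $H^2(M^5;\Z)$ is torsion-free when $\pi_1=0$, that $c_1$ of a complexification is $2$-torsion, and that $H^4(M^5;\Z)=0$ by Poincar\'e duality). The paper instead verifies triviality of $\C TX$ summand-by-summand in the Barden--Smale list (Audin for the Wu manifold, Proposition \ref{Proposition GBundle} applied to the circle bundle over $\mathbb{CP}^2\#\overline{\mathbb{CP}^2}$ for $S^3\widetilde{\times}S^2$, Theorem \ref{Theorem Surgeries} for the $X_k$) and then invokes Theorem \ref{Theorem ConnectedSums}. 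Your cohomological shortcut is cleaner for this particular statement, though the paper's piecewise route also yields Table \ref{Table 5-manifolds}, which it needs later.

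The embedding half is where the proposal has a genuine gap. You set up a search for an unspecified obstruction combining the normal-bundle isomorphism $\nu\cong TM^5$, Stein neighborhoods, Barden's linking form and $w_2$, and you correctly flag that isolating this obstruction ``is the crux of the proof'' --- but you never isolate it. The missing idea is Audin's Theorem \ref{Theorem AudinEven}(B): for a closed connected orientable manifold of dimension $N=4k+1$ admitting a totally real immersion, a totally real embedding exists \emph{if and only if} the Kervaire semi-characteristic $\hat{\chi}_{\Z/2}$ vanishes. Once one has the immersion result for all simply connected $5$-manifolds, the entire embedding classification collapses to computing $\hat{\chi}_{\Z/2}$ for each Barden--Smale summand (Table \ref{Table 5-manifolds}): it is $1$ for $S^5$, $M_{p^k}$, and $X_k$, and $0$ for $S^2\times S^3$, $S^3\widetilde{\times}S^2$, and $\Su(3)/\So(3)$. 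In particular no explicit embedding constructions are needed, and no new cohomological obstruction has to be invented.

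Two further inaccuracies: first, $S^2$ does \emph{not} admit a totally real embedding into $\C^2$ (among orientable surfaces only $T^2$ does), so your claimed product $S^2\times S^3\hookrightarrow\C^2\times\C^3$ is not a product of embeddings; the correct device is Proposition \ref{Proposition Audin}, which allows one factor to be merely immersed. Second, the constructions sketched for $S^3\widetilde{\times}S^2$ (``replacing the trivial product structure with the framing of the nontrivial bundle while preserving fiberwise totally realness'') and for the Wu manifold (via a Cartan embedding into $SU(3)$) are not worked out and are not obviously totally real; the paper instead derives all three existence statements in one stroke from Audin's criterion once the immersion is in hand.
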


The 5-sphere is the only irreducible simply connected 5-manifold that does not admit a totally real embedding into complex 5-space.  A complete list of simply connected 5-manifolds that admit such an embedding is given in Section \ref{Section Proof5D} and it includes the following set of examples. 

\begin{cor}\label{Corollary 5D} Let $M^5$ be a closed smooth simply connected 5-manifold with torsion-free second homology group $H_2(M^5; \Z)$ and suppose $k\in \N$. There is a totally real embedding\begin{equation}M^5\hookrightarrow \C^5\end{equation}if and only if $M^5$ is diffeomorphic to\begin{equation}S^3\widetilde{\times} S^2\# (2k - 2)(S^2\times S^3)\end{equation} if $w_2(M^5) \neq 0$ and to\begin{equation}(2k - 1)(S^2\times S^3)\end{equation} otherwise.
\end{cor}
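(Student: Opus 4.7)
The plan is to specialize the Barden--Smale classification of closed simply connected $5$-manifolds to the torsion-free case and to match the resulting list against the complete classification of simply connected $5$-manifolds admitting a totally real embedding into $\C^5$ that is compiled in Section~\ref{Section Proof5D}. The statement then becomes a matter of reading off the surviving entries and describing them by the parity of the rank $n := b_2(M^5)$.

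I would first invoke Barden--Smale: a closed simply connected $5$-manifold $M^5$ with $H_2(M^5;\Z)$ torsion free of rank $n$ is diffeomorphic to $n(S^2\times S^3)$ when $w_2(M^5)=0$ and to $(S^3\widetilde{\times} S^2)\#(n-1)(S^2\times S^3)$ when $w_2(M^5)\neq 0$, with $n=0$ understood as $S^5$. Intersecting this two-parameter family against the list produced in Section~\ref{Section Proof5D}, the torsion-free entries that admit a totally real embedding are precisely those with odd $n$; writing $n=2k-1$ for $k\in\N$ recovers the two families displayed in the statement and simultaneously rules out $S^5$, $2(S^2\times S^3)$, and every other even-rank case. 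Sufficiency for each odd $n$ is then built inductively from the irreducible seeds $S^2\times S^3$ and $S^3\widetilde{\times} S^2$, which embed totally really by Theorem~\ref{Theorem 5D}; Theorem~\ref{Theorem ConnectedSums} provides the corresponding totally real immersions of the iterated connected sums, and the odd-rank hypothesis is what is needed to promote these immersions to embeddings.

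The main obstacle I anticipate is pinning down the embedding-level obstruction that forces $n$ to be odd, since by the first half of Theorem~\ref{Theorem 5D} every simply connected $5$-manifold already admits a totally real immersion, and hence the relevant invariant is invisible at the immersion level. The natural candidate is the mod-$2$ self-intersection invariant of a generic immersion $M^5\rightarrow \C^5$, which is well defined in this odd middle dimension and must vanish for a totally real embedding. The heart of the argument will be to compute this invariant on each diffeomorphism class appearing in the Barden--Smale list and to verify that it equals $n \bmod 2$; once this is established, necessity is immediate and the corollary follows by direct bookkeeping against Section~\ref{Section Proof5D}.
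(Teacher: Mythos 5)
Your overall strategy---Barden--Smale classification plus a parity computation of a single $\Z/2$ obstruction---is exactly what the paper does, but your execution has two problems, one cosmetic and one genuine.

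The cosmetic issue is that you gesture at a ``mod-$2$ self-intersection invariant of a generic immersion'' without naming it. The paper's argument runs through Theorem~\ref{Theorem AudinEven}(B): for $N=4k+1$ a totally real immersion can be deformed to a totally real embedding if and only if the Kervaire semi-characteristic $\hat{\chi}_{\Z/2}(M^N)$ vanishes. This is precisely the invariant you have in mind (Audin's proof does proceed via double-point theory), and citing it directly avoids re-deriving Audin's theorem; without it, the claim that ``the odd-rank hypothesis promotes immersions to embeddings'' has no justification.

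The genuine error is the parity. You assert the invariant equals $n\bmod 2$ on the Barden--Smale list; if that were true, the invariant would vanish for $n$ even and embeddings would exist for even rank, contradicting the corollary you are trying to prove. The correct computation via Mayer--Vietoris and universal coefficients gives
\begin{equation*}
\hat{\chi}_{\Z/2}\bigl(n(S^2\times S^3)\bigr)\;=\;\hat{\chi}_{\Z/2}\bigl(S^3\widetilde{\times} S^2\#(n-1)(S^2\times S^3)\bigr)\;=\;n+1\bmod 2,
\end{equation*}
which vanishes exactly when $n$ is odd, i.e.\ $n=2k-1$. (Sanity check: $\hat{\chi}_{\Z/2}(S^5)=1$ at $n=0$, consistent with $S^5$ having no totally real embedding, and $\hat{\chi}_{\Z/2}(S^2\times S^3)=0$ at $n=1$.) Once you correct the formula to $n+1\bmod 2$ and cite Theorem~\ref{Theorem AudinEven}(B), your proof matches the paper's.
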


The homotopy type of a closed simply connected 5-manifold determines its diffeomorphism class \cite[Section 2]{[Barden]}. This is no longer the case in dimension six, where closed homotopy equivalent 6-manifolds need not be homeomorphic. Building on results of Wall \cite{[Wall]}, our next theorem states in terms of characteristic classes, the necessary and sufficient conditions for the existence of the maps of Definition \ref{Definition TotallyReal} for 6-manifolds. The symbol $\chi(M^N)$ stands for the Euler characteristic of the $N$-manifold $M^N$.

\begin{thm}\label{Theorem 6D} Let $M^6$ be a closed smooth orientable 6-manifold without 2-torsion in $H^3(M; \Z)$. There is a totally real immersion\begin{equation}\label{6D TRImmersion}M^6\rightarrow \C^6\end{equation} if and only if the first Pontrjagin class satisfies $p_1(M^6) = 0$.

There is a totally real embedding\begin{equation}M^6\hookrightarrow \C^6\end{equation} if and only if\begin{equation}p_1(M^6) = 0 = \chi(M^6).\end{equation}
\end{thm}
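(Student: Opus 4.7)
\emph{Proof plan.} I would treat the immersion and embedding statements separately, both going through the rank-$6$ complex bundle $TM^{6}\otimes_{\R}\C$.

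For the immersion direction, the necessity is immediate: a totally real immersion $f$ gives an isomorphism $TM^{6}\otimes_{\R}\C\cong f^{*}T\C^{6}$ of complex bundles, so the complexified tangent bundle is trivial and $p_{1}(M^{6})=-c_{2}(TM^{6}\otimes_{\R}\C)=0$. For the sufficiency, the Gromov--Wells--Audin h-principle reduces the problem to showing that $TM^{6}\otimes_{\R}\C$ is trivial. I would check this by showing that the three primary Chern obstructions all vanish. Orientability of $M^{6}$ gives $\det_{\C}(TM^{6}\otimes_{\R}\C)\cong(\det_{\R}TM^{6})\otimes\C$, so $c_{1}=0$. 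Self-conjugacy of any complexification forces $2c_{3}=0$, and since $H^{6}(M^{6};\Z)\cong\Z$ is torsion-free, $c_{3}=0$. Finally, $c_{2}=-p_{1}(M^{6})=0$ by hypothesis. Obstruction theory for complex rank-$6$ bundles on a $6$-dimensional CW complex then supplies a global trivialization.

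For the embedding direction, the necessity follows at once: the immersion case gives $p_{1}(M^{6})=0$, while for $\chi(M^{6})=0$ one uses that $J$ identifies the normal bundle of a totally real embedding with $TM^{6}$; since the embedded image is null-homologous in $\C^{6}$, this self-intersection --- which equals $\chi(M^{6})$ --- must vanish. For the sufficiency, I would start from the totally real immersion of the previous paragraph and promote it to an embedding. My plan is to invoke Wall's classification/structure results for closed orientable $6$-manifolds --- which become tractable exactly under the no $2$-torsion hypothesis on $H^{3}(M^{6};\Z)$ --- to reduce $M^{6}$ via iterated connected sums and low-index surgeries to model manifolds carrying explicit totally real embeddings, propagating the totally real structure via Theorems~\ref{Theorem Surgeries} and~\ref{Theorem ConnectedSums}, and exploiting $\chi(M^{6})=0$ to cancel residual self-intersections in pairs by a Whitney-trick argument performed inside the totally real relation.

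I expect the hardest step to be the sufficiency of the embedding direction: upgrading a totally real immersion to a totally real embedding is not a purely bundle-theoretic question, and this is precisely where Wall's classification and the no $2$-torsion hypothesis do essential work, by eliminating the $2$-torsion ambiguities in the characteristic-class data that would otherwise obstruct the reduction to standard models.
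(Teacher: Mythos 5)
Your treatment of the \emph{immersion} direction is correct, but it follows a genuinely different and, in fact, cleaner route than the paper. The paper uses the no-$2$-torsion hypothesis on $H^3(M;\Z)$ to equip $M^6$ with an almost-complex structure (via Okonek--Van de Ven), splits $\C TM = T^{1,0}M\oplus T^{0,1}M$, and then kills $c_1$ and $c_3$ via the Whitney product formula and $c_i(T^{1,0}M)=(-1)^i c_i(T^{0,1}M)$. You instead kill $c_1(\C TM)$ by observing that $\det_\C(\C TM)\cong(\det_\R TM)\otimes\C$ is trivial when $M$ is orientable, and kill $c_3(\C TM)$ by noting that any complexification is self-conjugate, so $2c_3=0$, combined with $H^6(M^6;\Z)\cong\Z$ being torsion-free. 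Both of your steps are valid, elementary, and --- noteworthy --- do not appear to use the $H^3$-torsion hypothesis at all, while the paper's route uses it to produce the almost-complex structure. Both arguments then funnel through Peterson's classification of complex bundles of rank $\geq 3$ over a $6$-complex by their Chern classes and conclude via Gromov's $h$-principle. Your necessity argument and sign conventions are also fine.

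Where your proposal has a genuine gap is the \emph{sufficiency} of the embedding criterion. After the immersion is secured, the paper concludes in a single line by quoting Audin's result (Theorem~\ref{Theorem AudinEven}, part~(A)): for even $N$, a totally real immersion upgrades to a totally real embedding exactly when $\chi(M^N)=0$. You instead sketch a large surgical programme --- Wall's $6$-manifold structure theory, iterated connected sums and low-index surgeries through Theorems~\ref{Theorem Surgeries} and~\ref{Theorem ConnectedSums}, and a Whitney-trick argument ``inside the totally real relation.'' That is essentially an outline of a proof of Audin's theorem in this special case, and as written it is not close to complete: you do not specify what the model manifolds are, why the reduction terminates, or why a Whitney disk can be made totally real. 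You also misattribute the role of the $H^3$-torsion hypothesis to this embedding step, whereas in the paper it is used only for the immersion step. Your necessity argument for $\chi=0$ (null-homologous image, normal bundle $\cong TM$ via $J$, so the self-intersection $\chi(M)$ vanishes) is the standard Wells computation and is correct. In short: the immersion half is a valid and arguably preferable alternative; the embedding half misses the short route and substitutes a vague, much harder one.
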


Results of Dehn \cite{[Dehn]}, Kervaire-Milnor \cite{[MilnorKervaire]} and Gromov \cite{[Gromov1]} imply that there is a totally real immersion $M^N(G)\rightarrow \C^N$ of a closed orientable $N$-manifold $M^N(G)$ with prescribed finitely presented fundamental group $\pi_1(M^N(G)) = G$ for every $N\geq 4$. However, the fundamental group does impose a restriction for the existence of a totally real embedding of a 4-manifold into $\C^4$. Indeed, an argument due to Wells \cite{[Wells]} shows that the Euler characteristic of such a 4-manifold must be zero, while Poincar\'e duality shows that there is a myriad of choices of finitely presented groups $G$ that force the Euler characteristic of a closed orientable 4-manifold $M^4(G)$ to be strictly positive. Our next result shows that this is not the case in higher dimensions.

\begin{thm}\label{Theorem G} Let $G$ be a finitely presented group. There is a totally real immersion\begin{equation}M^4(G)\rightarrow \C^4\end{equation} of a given closed smooth orientable 4-manifold $M^4(G)$ with fundamental group isomorphic to $G$. 

There is a totally real embedding\begin{equation}M^N(G)\hookrightarrow \C^N,\end{equation}where $M^N(G)$ is a given closed smooth $N$-manifold with fundamental group isomorphic to $G$ and for every $N\geq 5$.
\end{thm}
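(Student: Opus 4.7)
The plan is to prove both parts by combining the standard surgery-theoretic realization of any finitely presented group as a fundamental group of a closed manifold with the cut-and-paste results already established in the paper. Fix a finite presentation $G = \langle x_1,\ldots,x_k \mid r_1,\ldots,r_\ell\rangle$.

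For part (1), I would start from $W := \#_k(S^1\times S^3)$, a closed orientable 4-manifold with $\pi_1(W)\cong F_k$. Each copy of $S^1\times S^3$ is parallelizable (it is a Lie group), so $p_1=0$ and the Jacobowitz--Landweber criterion quoted in the introduction furnishes a totally real immersion $S^1\times S^3\to\C^4$; iterating Theorem \ref{Theorem ConnectedSums} (and observing that $S^1\times S^3$ admits an orientation-reversing self-diffeomorphism, so that $\overline{S^1\times S^3}\cong S^1\times S^3$) produces a totally real immersion $W\to\C^4$. I would then represent each relator $r_j$ by an embedded circle $\gamma_j\subset W$ with its trivial normal framing (available since the normal bundle of any $1$-submanifold of an orientable manifold is trivial) and apply Theorem \ref{Theorem Surgeries} with $p=1$ in succession for $j=1,\ldots,\ell$. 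The result is a closed orientable 4-manifold $M^4(G)$ with $\pi_1(M^4(G))\cong G$ and a totally real immersion into $\C^4$.

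For part (2), the analogous construction starting from $\#_k(S^1\times S^{N-1})$ and performing $\ell$ circle surgeries along representatives of the relators yields a closed orientable $N$-manifold $M^N(G)$ with $\pi_1\cong G$ and a totally real \emph{immersion} into $\C^N$: the starting manifold is parallelizable, so its relevant Pontrjagin classes vanish and it totally really immerses by the higher-dimensional analog of the Jacobowitz--Landweber criterion, Theorem \ref{Theorem ConnectedSums} extends this to the $k$-fold connected sum, and Theorem \ref{Theorem Surgeries} with $p=1$ preserves it under the relator surgeries. The main obstacle is upgrading this immersion to a totally real embedding, since the cut-and-paste results of the paper are stated for immersions only. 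To resolve this I would invoke the h-principle for totally real embeddings in dimension $N\geq 5$ established by Gromov \cite{[Gromov1]} and Forstneri\u{c} \cite{[Forstneric]}: a totally real immersion $M^N\to\C^N$ of a closed smooth orientable $N$-manifold can be deformed through totally real immersions to a totally real embedding whenever $\chi(M^N)=0$. For $N\geq 5$ odd the Euler characteristic vanishes automatically. For $N\geq 6$ even, one tunes the construction so that $\chi(M^N(G))=0$ by inserting additional trivially framed surgeries on nullhomotopic $S^2$'s within the scope of Theorem \ref{Theorem Surgeries} (these preserve $\pi_1$ for $N\geq 5$ by Van Kampen and change $\chi$ by $-2$), or connected sums with $S^2\times S^{N-2}$ within the scope of Theorem \ref{Theorem ConnectedSums} (these change $\chi$ by $+2$ and themselves totally really immerse via the analog of Theorem \ref{Theorem 6D}), so that the totally real immersion is maintained throughout the Euler-characteristic adjustment.
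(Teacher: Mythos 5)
Your construction of the immersion is sound and runs along a different but equivalent road from the paper: instead of citing Dehn and Kervaire--Milnor directly for a stably parallelizable realization of $G$, you rebuild the manifold by hand from $\#_k(S^1\times S^{N-1})$ using Theorem \ref{Theorem ConnectedSums} and Theorem \ref{Theorem Surgeries}; this works for every $N\geq 4$ and is perfectly compatible with the paper's Theorem \ref{Theorem TRImmersions}. The even-dimensional bookkeeping (killing $\chi$ by $\pm 2$ via $S^2$-surgeries and connected sums with $S^2\times S^{N-2}$) is also fine in spirit, although $S^2\times S^{N-2}$ should be justified via Corollary \ref{Corollary Stably} (it is stably parallelizable) rather than by an analog of Theorem \ref{Theorem 6D}, which is a statement about 6-manifolds.

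The gap is in the passage from totally real immersion to totally real embedding in odd dimensions. There is no h-principle saying that a totally real immersion of a closed $N$-manifold deforms to a totally real embedding whenever $\chi(M^N)=0$; for odd $N$ the Euler characteristic always vanishes, and yet $S^N$ admits a totally real immersion but no totally real embedding into $\C^N$ for $N>3$ (this is already recalled in the introduction and attributed to Gromov and Stout--Zame), so $S^5, S^7, S^9,\dots$ refute your claimed criterion. The correct obstruction-theoretic input is Audin's Theorem \ref{Theorem AudinEven}: for $N$ even the condition is indeed $\chi(M^N)=0$, but for $N=4k+1$ the relevant invariant is the Kervaire semi-characteristic $\hat{\chi}_{\Z/2}(M^N)$, which you never adjust, and for $N=4k+3$ the theorem says nothing at all. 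The paper circumvents this: for $N=5$ it arranges $\hat{\chi}_{\Z/2}=0$ by a possible connected sum with $S^2\times S^3$ and applies Theorem \ref{Theorem AudinEven}(B), and for all other odd $N\geq 7$ (in particular $N=4k+3$) it avoids the issue entirely by taking products $M^4(G)\times (S^2)^{k_1-1}\times (S^3)^{k_2}$ and invoking Proposition \ref{Proposition Audin}, which upgrades a totally real immersion crossed with a totally real embedding to a totally real embedding of the product. Without Audin's product proposition, your proof has no mechanism to produce embeddings when $N$ is odd.
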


The structure of the paper is as follows. Classical and new existence results on totally real immersions and embeddings which we build upon to prove the results presented in the introduction are collected in Section \ref{Section TRI}. A proof of Theorem \ref{Theorem Surgeries} and Theorem \ref{Theorem ConnectedSums} is given in Section \ref{Section SurgeryConstructions}. Theorem \ref{Theorem 5D} and its corollary are proven in Section \ref{Section Proof5D} and Section \ref{Section Corollary5D}, respectively, while Theorem \ref{Theorem 6D} is proven in Section \ref{Section Proof6D} and Theorem \ref{Theorem G} in Section \ref{Section ProofTheoremG}. Section \ref{Section Generic} contains some results addressing the existence of more general immersions with respect to the ones considered in Definition \ref{Definition TotallyReal} (cf. \cite{[JL]}).

\section{Totally real immersions and embeddings}\label{Section TRI}

Wells showed that if an $N$-manifold $M^N$ admits a totally real immersion into $\C^N$ in the sense of Definition \ref{Definition TotallyReal}, then its complexified tangent bundle\begin{equation}\C TM^N:= TM^N\otimes \C\end{equation} is trivial \cite{[Wells]}. Gromov proved that if $\C TM$ is trivial, then such an immersion exists by using convex integration in \cite{[Gromov1]} (cf. \cite[Theorem 1.2]{[JL]}).

\begin{theorem}\label{Theorem TRImmersions} Wells \cite{[Wells]}, Gromov \cite{[Gromov1]}. There is a totally real immersion $M^N\rightarrow \C^N$ if and only if $\C TM^N$ is  a trivial bundle  of rank $N$.
\end{theorem}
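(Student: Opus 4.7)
The plan is to handle the two implications separately. The forward direction is a direct algebraic observation about the differential of the immersion, while the converse invokes Gromov's h-principle for totally real immersions established via convex integration.

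For the forward direction (necessity), I would start from a totally real immersion $f\colon M^N\to\C^N$ and use its differential to produce an explicit trivialization of $\C TM^N$. Since $df_x(T_xM^N)$ and $J\cdot df_x(T_xM^N)$ are transverse real $N$-planes in $T_{f(x)}\C^N\cong\C^N$, their direct sum fills the full complex tangent space at every point. Using the canonical real-bundle splitting $\C TM^N = TM^N \otimes_\R \C \cong TM^N \oplus TM^N$, I would define a fiberwise $\R$-linear map $\C TM^N \to M^N \times \C^N$ by $u\otimes 1 + v\otimes i \mapsto df(u) + J\,df(v)$. A short verification shows this map intertwines the tensor complex structure with $J$, making it a complex bundle isomorphism, so $\C TM^N$ is trivial of complex rank $N$.

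For the converse (sufficiency), I would start from a complex trivialization $\phi\colon \C TM^N\xrightarrow{\sim} M^N\times\C^N$ and restrict along the canonical real inclusion $\iota\colon TM^N\hookrightarrow \C TM^N$, $v\mapsto v\otimes 1$, whose fiberwise image is the totally real subspace $T_xM^N\otimes 1\subset T_xM^N\otimes_\R\C$. The composite $\phi\circ\iota\colon TM^N\to M^N\times\C^N$ is then a fiberwise injective real bundle map whose image at each point is a totally real $N$-plane of $\C^N$. This is precisely the formal (bundle-level) data of a totally real immersion. The remaining task is to promote this formal section to a genuine map: produce an immersion $f\colon M^N\to\C^N$ whose differential $df$ is homotopic to $\phi\circ\iota$ through totally real bundle monomorphisms. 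For this I would appeal to Gromov's convex integration h-principle: the totally real condition cuts out an open, $\mathrm{Diff}$-invariant subset of the 1-jet bundle $J^1(M^N,\C^N)$ whose fiber (the open subset of totally real $N$-planes inside the Grassmannian of real $N$-planes in $\C^N$) has complement of positive codimension, so convex hulls of admissible directions fill affine charts and the ampleness condition holds.

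The principal obstacle is the h-principle step. I would not reprove Gromov's theorem, but rather cite it as a black box after verifying that the differential relation of being a totally real immersion is open, diffeomorphism-invariant, and ample in the convex integration sense; the forward direction and the construction of the formal datum $\phi\circ\iota$ are otherwise elementary.
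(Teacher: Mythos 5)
Your proposal is correct and follows exactly the route the paper indicates by citation: the forward direction (Wells) via the explicit $\C$-linear bundle isomorphism $u\otimes 1 + v\otimes i \mapsto df(u) + J\,df(v)$, whose fiberwise bijectivity is precisely the totally real condition $df(T_xM)\cap J\,df(T_xM)=\{0\}$ together with a dimension count, and the converse (Gromov) by producing the formal solution $\phi\circ\iota$ from a complex trivialization $\phi$ of $\C TM^N$ and then invoking the convex-integration h-principle. The paper itself offers no proof beyond these citations, so your write-up is a faithful expansion of the same argument rather than a genuinely different one. One caution worth noting: your parenthetical justification of ampleness --- that the complement of the totally real locus has positive codimension, hence convex hulls fill affine charts --- is not a valid implication in general. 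Ampleness is a condition on \emph{principal} affine subspaces of the jet fiber (each path component of the relation restricted to a principal subspace must have that entire subspace as its convex hull), and positive codimension of the global complement does not by itself imply this; for the totally real relation it is true but must be checked on principal subspaces. Since you explicitly intend to cite Gromov as a black box rather than reprove the h-principle, this imprecision does not undermine your argument, but if you ever needed to verify ampleness yourself the reasoning as written would be insufficient.
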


Let $\epsilon^k$ be a trivial rank $k$ bundle over $M^N$. A smooth $N$-manifold $M^N$ is stably-parallelizable if the Whitney sum $TM^N\oplus \epsilon^1$ is a trivial bundle. If the bundle $TM^N\oplus \epsilon^1$ is trivial, then so is its complexification $\C TM^N$ \cite[Lemma 1.2]{[JL]}. Therefore, Theorem \ref{Theorem TRImmersions} has the following immediate consequence. 

\begin{corollary}\label{Corollary Stably} Every stably parallelizable manifold $M^N$ admits a totally real immersion into $\C^N$.
\end{corollary}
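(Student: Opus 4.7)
The plan is to deduce the corollary as an immediate consequence of Theorem \ref{Theorem TRImmersions}, for which we only need to verify that stable parallelizability implies the triviality of the complexified tangent bundle $\C TM^N$ of complex rank $N$ (this is the content of Lemma 1.2 of \cite{[JL]} that is being invoked, but since the computation is short, I would include it for the reader's convenience).

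First I would unpack the hypothesis: by definition, there is an isomorphism $TM^N\oplus \epsilon^1 \cong \epsilon^{N + 1}$ of real vector bundles over $M^N$. Applying the functor $-\otimes_{\R}\C$ yields the isomorphism of complex vector bundles
\begin{equation}
\C TM^N \oplus \C\epsilon^1 \cong \C\epsilon^{N+1},
\end{equation}
so $\C TM^N$ is stably trivial as a complex rank $N$ bundle. Next I would argue that stable triviality actually implies triviality for this bundle by a standard dimension count: $\C TM^N$ is classified by a map $M^N\to BU(N)$, and since the real dimension of $M^N$ equals $N\leq 2N$, we are in the complex stable range where $BU(N)\to BU(N+1)$ induces a bijection on homotopy classes of maps from $M^N$. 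Consequently the triviality of $\C TM^N\oplus \C$ passes down to the triviality of $\C TM^N$ itself.

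With triviality of $\C TM^N$ established, Theorem \ref{Theorem TRImmersions} of Wells and Gromov produces a totally real immersion $M^N\rightarrow \C^N$, which concludes the argument. I do not anticipate any serious obstacle here; the entire content is the passage from real stable triviality to complex triviality of rank $N$, and the stable range estimate for complex bundles of rank equal to the dimension of the base makes this automatic.
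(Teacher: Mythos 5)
Your proof is correct and takes essentially the same route as the paper: both deduce the corollary from Theorem \ref{Theorem TRImmersions} by observing that stable parallelizability forces $\C TM^N$ to be trivial. The paper simply cites \cite[Lemma 1.2]{[JL]} for this implication, whereas you supply the short argument behind it (complexify the real stable trivialization, then invoke the stable range $N\leq 2N$ for complex rank-$N$ bundles over an $N$-dimensional base), which is exactly the content of that lemma.
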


Audin studied conditions under which the product of manifolds admits a totally real immersion whenever each of the factors does \cite[6.2]{[Audin]}. 

\begin{proposition}\label{Proposition Audin} Audin \cite[6.2.3. Remarque]{[Audin]}. Suppose there exist a totally real immersion $X^N\rightarrow \C^N$ and a totally real embedding $Y^M\rightarrow \C^M$. There is a totally real embedding\begin{equation}X^N\times Y^M\hookrightarrow \C^N\times \C^M\cong \C^{N + M}.\end{equation}
\end{proposition}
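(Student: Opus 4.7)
The plan is to start from the obvious candidate $F_0(x,y) = (f(x), g(y))$ and perturb it using $g$ to kill the double points inherited from $f$. A direct block-diagonal check shows that $F_0$ is already a totally real immersion of $X^N \times Y^M$ into $\C^{N+M}$; only injectivity can fail, and it fails solely because $f$ may not be injective (since $g$ already is).

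I would first perturb $f$ in $C^1$ to be self-transverse in addition to totally real. Total reality is a $C^1$-open condition by Theorem~\ref{Theorem TRImmersions}, so self-transversality can be achieved by a generic small perturbation without destroying the totally real property. Because $X$ is compact and $f:X^N \to \C^N$ sits in twice its real dimension, self-transversality yields only a finite set of double points $\{(x_i, x_i')\}_{i=1}^{k}$ with $x_i \neq x_i'$ and $f(x_i) = f(x_i')$.

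The main construction is then to introduce a correction
$$F_\lambda(x, y) = \bigl(f(x),\, g(y) + \lambda\,\beta(x)\bigr), \qquad \lambda \in \R,$$
where $\beta: X \to \C^M$ is any smooth map satisfying $\beta(x_i) \neq \beta(x_i')$ for $i = 1, \dots, k$; this is a finite list of distinctness conditions, trivially achievable. The key structural point, which I would verify as the main check, is that $F_\lambda$ remains totally real for every choice of $\lambda$ and $\beta$. The reason is that the first component of $dF_\lambda$ is still $df$, so in any relation $dF_\lambda(v,w) = J\,dF_\lambda(v',w')$ the first coordinate reads $df(v) = J\,df(v')$ and already forces $v = v' = 0$ by total reality and injectivity of $df$, after which the second coordinate collapses to $dg(w) = J\,dg(w')$ and forces $w = w' = 0$ by total reality of $g$. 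The same two-step argument shows $F_\lambda$ is an immersion. Injectivity for large $\lambda$ follows from compactness of $g(Y) - g(Y) \subset \C^M$: an alleged coincidence $F_\lambda(x,y) = F_\lambda(x',y')$ forces $f(x) = f(x')$ together with $g(y) - g(y') = \lambda(\beta(x') - \beta(x))$, and choosing $\lambda$ large enough makes each $\lambda(\beta(x_i') - \beta(x_i))$ escape the bounded set $g(Y) - g(Y)$, leaving only the case $x = x'$, which is resolved by injectivity of $g$.

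The main obstacle I anticipate is purely the self-transversality reduction in the first step: without it, the double point locus of $f$ could be positive-dimensional and a one-parameter correction via $\lambda$ would be insufficient to push it outside $g(Y) - g(Y)$ uniformly. Once the double points are finite, the cost-free preservation of total reality by the correction is essentially forced by the block structure of $F_\lambda$, and a smooth injective immersion from the compact manifold $X \times Y$ is automatically a totally real embedding into $\C^{N+M}$.
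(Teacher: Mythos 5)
The paper states this proposition with only a citation to Audin and supplies no proof of its own, so there is nothing in the paper to compare against; I will simply assess your argument, which I believe is correct. The structural observation that carries the proof is well chosen: the shear $F_\lambda(x,y) = (f(x),\, g(y) + \lambda\beta(x))$ has block-lower-triangular differential whose first block is $df$ independently of $\beta$ and $\lambda$, so any intersection of the image of $dF_\lambda$ with its $J$-image is forced to have vanishing first coordinate by total reality and injectivity of $df$, after which the $\beta$-term drops out and total reality of $dg$ finishes both the totally real check and the immersion check. The reduction to finitely many double points by a self-transverse $C^1$-small perturbation (an $N$-manifold self-transversely immersed in a $2N$-manifold has a zero-dimensional, hence finite, double-point set; triple points are generically absent), the trivial choice of $\beta$ separating the finitely many pairs, and the compactness bound on $g(Y)-g(Y)$ forcing injectivity for large $\lambda$ are all sound. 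One minor misattribution worth fixing: the $C^1$-openness of total reality does not come from Theorem~\ref{Theorem TRImmersions}, which is an existence criterion in terms of $\C TM^N$; openness follows directly from Definition~\ref{Definition TotallyReal}, since the non-totally-real $N$-planes form a closed subset of the Grassmannian $\mathrm{Gr}_N(\R^{2N})$ and the Gauss map of an immersion depends continuously on its $1$-jet. With that citation corrected, the proof is complete.
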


Our next result extends her results to a more general case of fiber bundles.

\begin{proposition}\label{Proposition GBundle} Let $X^N$ be a closed smooth orientable $N$-manifold with trivial complexified tangent bundle $\C TX^N$. Let $Y^{N + k}$ be the total space of a principal $k$-torus bundle for $k\in \N$\begin{equation}T^k\hookrightarrow Y^{N + k}\overset{\pi}\longrightarrow X^N.\end{equation} There exists a totally real immersion\begin{equation}Y^{N + k}\rightarrow \C^{N + k}.\end{equation}
\end{proposition}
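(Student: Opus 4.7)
The plan is to reduce everything to Theorem \ref{Theorem TRImmersions}, so the goal is to show that $\C TY^{N+k}$ is trivial as a complex rank $N+k$ bundle over $Y^{N+k}$.

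First I would exploit the principal bundle structure to decompose $TY^{N+k}$. For any principal $G$-bundle $G\hookrightarrow Y \to X$, the vertical tangent bundle $V Y := \ker d\pi$ is canonically trivialized: each vector $\xi \in \mathfrak{g}$ produces the fundamental vector field on $Y$ induced by the right $G$-action, giving an isomorphism $Y\times \mathfrak{g}\xrightarrow{\sim} V Y$. Here $G = T^k$, so $V Y \cong \epsilon^k$ is a trivial real rank $k$ bundle. The differential of $\pi$ then gives a short exact sequence
\begin{equation}
0 \longrightarrow V Y \longrightarrow TY^{N+k} \longrightarrow \pi^* TX^N \longrightarrow 0,
\end{equation}
which splits (choose any connection, or use that short exact sequences of smooth vector bundles over a manifold always split). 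This yields the smooth isomorphism
\begin{equation}
TY^{N+k} \;\cong\; \epsilon^k \oplus \pi^* TX^N.
\end{equation}

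Next I would complexify. Tensoring with $\C$ commutes with pullback and direct sum, so
\begin{equation}
\C TY^{N+k} \;\cong\; \C^k \oplus \pi^*(\C TX^N).
\end{equation}
By hypothesis $\C TX^N$ is trivial of rank $N$, hence so is its pullback $\pi^*(\C TX^N)$. Therefore $\C TY^{N+k}$ is a trivial complex bundle of rank $N + k$ over $Y^{N+k}$.

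Applying Theorem \ref{Theorem TRImmersions} to $Y^{N+k}$ then produces the desired totally real immersion $Y^{N+k}\rightarrow \C^{N+k}$. There is no real obstacle here: the mild point is just recognizing that the triviality of $\C TX^N$ is stable under adding a trivial bundle and under pullback, so the principal torus bundle structure automatically transports the hypothesis from $X^N$ to $Y^{N+k}$.
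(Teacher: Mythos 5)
Your proof is correct and follows essentially the same route as the paper: decompose $TY^{N+k} \cong \pi^{*}TX^{N} \oplus \epsilon^{k}$ (the paper cites Szczarba for this, whereas you spell out the fundamental-vector-field trivialization of the vertical bundle and the splitting of the short exact sequence), complexify, and invoke Theorem \ref{Theorem TRImmersions}.
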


Notice that the converse to the conclusion of Proposition \ref{Proposition GBundle} does not hold as exemplified by $S^1\hookrightarrow S^5\rightarrow \mathbb{CP}^2$. Moreover, the conclusion of Proposition \ref{Proposition GBundle} can be strengthened to cover embeddings.

\begin{proof} The tangent bundle of $Y^{N + k}$ can be written as\begin{equation}TY^{N + k} \cong \pi^{\ast}TX^N\oplus\epsilon^k\end{equation} where $\epsilon^k$ is a trivial rank $k$ bundle over $Y^{N + k}$ whose elements are tangent to the torus fibers \cite{[Szczarba]}. Its complexification is\begin{equation}\C TY^{N + k} = TY^{N + k}\otimes \C = (\pi^{\ast}TX^N\oplus \epsilon^k)\otimes \C = (\pi^{\ast}TX^N\otimes \C) \oplus (Y^{N + k}\times \C),\end{equation} which is trivial since $\C TX^N$ is assumed to be trivial. Theorem \ref{Theorem TRImmersions} implies that there is a totally real immersion $Y^{N + k}\rightarrow \C^{N + k}$.
\end{proof}

Recall that the Kervaire semi-characteristic of a closed smooth $n$-manifold $M^N$ of dimension $N = 2k + 1$ for $k\in \N$ is defined as\begin{equation}\label{SemiChar}\hat{\chi}_{\Z/2}(M^N):= \sum^k_{i = 0}\Dim H^i(M^N; \Z/2) \Mod 2\end{equation}by Luzstig-Milnor-Peterson \cite{[LMP]}. 

\begin{theorem}\label{Theorem AudinEven} Audin \cite[0.4 Proposition, 0.5 Th\'eor\`eme, 0.6 Corollaire]{[Audin]}. Let $M^N$ be a closed smooth connected orientable $N$-manifold of dimension and  suppose there is a totally real immersion $M^N\rightarrow \C^N$. 

(A) Suppose $N$ is even. There is a totally real embedding $M^N\hookrightarrow \C^N$ if and only if $\chi(M^N) = 0$.

(B) Suppose $N = 4k + 1$ for $k\in \N$. There is a totally real embedding $M^N\hookrightarrow \C^N$ if and only if $\hat{\chi}_{\Z/2}(M^N) = 0$.
\end{theorem}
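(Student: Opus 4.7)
The plan is to prove each direction separately, handling the even case (A) and the odd case (B) in parallel.

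For necessity, I would exploit the special structure of the normal bundle of a totally real embedding. Given $f\colon M^N \hookrightarrow \C^N$ totally real, the condition $TM \cap JTM = \{0\}$ combined with the dimension count $\dim_\R \C^N = 2N$ yields the real splitting $f^\ast T\C^N = TM \oplus JTM$, and multiplication by $J$ produces a fibrewise $\R$-linear isomorphism $\nu(f) \cong TM$. Since $M$ is a closed oriented codimension-$N$ submanifold of $\R^{2N}$, the normal Euler number $\langle e(\nu(f)), [M]\rangle$ vanishes: the zero section of the normal bundle is disjoint from $M$ after a large ambient translation of $\C^N$. Combined with $\nu(f) \cong TM$ this gives $\chi(M) = \langle e(TM),[M]\rangle = \pm \langle e(\nu(f)),[M]\rangle = 0$, settling (A). For (B) the invariant $\chi(M)$ vanishes automatically in odd dimension, and I would extract a finer obstruction by observing that the existence of the isomorphism $\nu(f) \cong TM$ with $TM \oplus \nu(f)$ complex-trivial pins down a specific tangential refinement on $M^{4k+1}$, then invoke Atiyah's characterisation of the Kervaire semi-characteristic as the mod-$2$ index of a real elliptic operator whose vanishing is necessary for such a refinement to exist.

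For sufficiency, I would start with a totally real immersion $g\colon M^N \to \C^N$ supplied by Theorem \ref{Theorem TRImmersions} and put it in general position so that its self-intersections consist of finitely many transverse double points. Each double point carries a sign when $N$ is even or a mod-$2$ weight when $N$ is odd, and by a Whitney-type double-point formula applied to the totally real immersion (where $\nu(g) \cong TM$ as above), the resulting signed count equals $\chi(M)$ up to sign in the even case, and the mod-$2$ count equals $\hat{\chi}_{\Z/2}(M)$ in the case $N = 4k+1$. When the hypothesis forces this invariant to vanish, the double points pair up and each pair is cancelled by a Whitney move: choose a Whitney disk in $\C^N$ and regularly homotope $g$ across it. Iterating over all pairs produces a smooth embedding.

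The principal obstacle is that a generic Whitney move is \emph{not} a homotopy through totally real immersions, so the smooth cancellation must be upgraded to one taking place in the totally real category. This calls for a parametric $h$-principle for totally real immersions relative to a closed subset, in the style of the convex integration arguments underlying Theorem \ref{Theorem TRImmersions}: one first performs the smooth Whitney cancellation and then $C^0$-approximates it by a totally real homotopy that agrees with $g$ outside a neighbourhood of the Whitney disk, exploiting the openness and flexibility of the differential relation $TM \cap JTM = \{0\}$. Establishing this relative $h$-principle with enough control to preserve the embedding property globally, together with a careful treatment of orientation signs in the identification $\nu(f) \cong TM$ so that the double-point count really equals $\pm \chi(M)$ in the even case and $\hat{\chi}_{\Z/2}(M)$ in the odd case, is where I expect the main technical work to lie.
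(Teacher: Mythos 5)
This is a quoted result: the paper states Theorem~\ref{Theorem AudinEven} as a citation of Audin \cite[0.4, 0.5, 0.6]{[Audin]} and offers no proof of its own, so the only meaningful comparison is with Audin's original argument. Your outline does in fact track Audin's strategy quite closely. The necessity of $\chi(M)=0$ in part (A) via $\nu(f)\cong J\,TM\cong TM$ together with the vanishing of the normal Euler number of an embedding into Euclidean space is exactly the standard argument and is correct. The sufficiency mechanism you describe --- put the totally real immersion in general position, read off the double-point count, and cancel pairs by Whitney moves --- is precisely the skeleton of Audin's 0.4/0.5/0.6, and you correctly flag that the heart of the matter is upgrading the Whitney move to take place through totally real maps.

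Two places deserve more care. First, the necessity direction of (B) is taken by a detour that is both vaguer and harder than what is needed: you invoke ``a specific tangential refinement'' and Atiyah's mod-$2$ index interpretation of the Kervaire semi-characteristic, neither of which you make precise. In Audin's treatment, (A) and (B) both fall out of a single double-point formula (her Proposition 0.4): for a self-transverse totally real immersion the algebraic (resp.\ mod-$2$) count of double points is identified with $\pm\tfrac12\chi(M)$ when $N$ is even (resp.\ with $\hat{\chi}_{\Z/2}(M)$ when $N\equiv 1\pmod 4$), and since an embedding has zero double points, necessity for both parts is immediate; you should derive (B) this way rather than appealing to an external index theorem whose hypotheses you have not verified. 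Second, for sufficiency the step ``first perform the smooth Whitney cancellation and then $C^0$-approximate by a totally real homotopy'' is not quite sound as written: the $h$-principle you cite is an $h$-principle for totally real \emph{immersions}, and $C^0$-approximating an embedding by a totally real immersion does not automatically return an embedding. One must carry out the Whitney move inside the totally real category from the start, which is exactly the technical content of Audin's Th\'eor\`eme 0.5; the proposal currently replaces this by a plausibility argument. You also implicitly use that the double-point count is independent of the choice of generic totally real immersion in a given case, which again is part of the content of 0.4 and should be stated rather than assumed. In short, the skeleton is the right one and matches the cited source, but the proposal leaves the genuinely difficult steps --- the precise double-point formula and the totally real Whitney cancellation --- at the level of an outline.
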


We finish the section with the following sets of examples.

\begin{proposition}\label{Proposition 5D} Let $M^5$ be a closed smooth simply connected 5-manifold with second Stiefel-Whitney class $w_2(M^5) = 0$. 

The manifold $M^5$ is stably parallelizable and there is a totally real immersion $M^5\rightarrow \C^5$.

There is a totally real embedding $M^5\hookrightarrow \C^5$ if and only if $M^5$ is parallelizable. 
\end{proposition}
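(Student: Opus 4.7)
The plan is to combine a characteristic class computation with Corollary~\ref{Corollary Stably} for the immersion statement, and to combine Theorem~\ref{Theorem AudinEven}(B) with the classical span-of-manifolds theorem of E.~Thomas for the embedding statement.

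First I would verify that every Stiefel--Whitney and Pontrjagin class of $M^5$ vanishes. Orientability (from $\pi_1 = 0$) gives $w_1 = 0$; the hypothesis kills $w_2$; $w_3 = Sq^1 w_2 = 0$ by the Wu formula; $w_5$ is the mod-$2$ Euler class, which vanishes because $\chi(M^5) = 0$ for every closed odd-dimensional manifold. Poincar\'e duality and the universal coefficient theorem together with $H_1(M^5;\Z) = 0$ give $H^4(M^5;\Z) = 0 = H^4(M^5;\Z/2)$, so $w_4$ and $p_1$ are forced to vanish as well. A standard obstruction-theoretic argument on the classifying map $M^5 \to BO$, using $\pi_1(BO) = \pi_2(BO) = \Z/2$, $\pi_3(BO) = 0$, $\pi_4(BO) = \Z$, $\pi_5(BO) = 0$, then trivializes $TM^5 \oplus \epsilon^1$. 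Corollary~\ref{Corollary Stably} now delivers a totally real immersion $M^5 \rightarrow \C^5$.

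For the embedding statement, Theorem~\ref{Theorem AudinEven}(B) with $N = 4\cdot 1 + 1$ reduces the existence of a totally real embedding to the condition $\hat{\chi}_{\Z/2}(M^5) = 0$. To match this with parallelizability, I would invoke the classical theorem of E.~Thomas asserting that, for a stably parallelizable closed $(4k+1)$-manifold, the sole obstruction to being parallelizable is the mod-$2$ Kervaire semi-characteristic. Since $M^5$ is stably parallelizable by the previous step, Thomas's theorem gives parallelizable $\Leftrightarrow \hat{\chi}_{\Z/2}(M^5) = 0$, which combined with Audin's result yields the desired equivalence.

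I expect this last step to be the main obstacle. The immersion half is a routine characteristic-class computation feeding the preceding results of the section. The equivalence between semi-characteristic vanishing and full parallelizability, by contrast, rests on Thomas's nontrivial span-of-manifolds theorem, which the proof will need to cite cleanly rather than re-derive. A secondary, more minor point is confirming that the vanishing of $w_1,\ldots,w_5$ and $p_1$ genuinely implies stable parallelizability on a 5-complex; this follows at once from the stated homotopy groups of $O$, but should be made explicit.
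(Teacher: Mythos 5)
Your proof is correct, and for the stable-parallelizability step you take a genuinely different route from the paper. The paper reaches the same conclusion by combining Hirsch's characterization (a closed orientable $n$-manifold is stably parallelizable if and only if it immerses into $\R^{n+1}$) with Barden's Lemma 2.4 (a closed simply connected 5-manifold immerses into $\R^6$ if and only if $w_2=0$), and then applies Corollary~\ref{Corollary Stably}. You instead run a direct obstruction-theoretic argument on the classifying map to $BO$: you check $w_1,\dots,w_5$ and $p_1$ vanish (with $H^4(M^5;\Z)=H^4(M^5;\Z/2)=0$ handling $w_4$ and $p_1$ automatically) and use $\pi_3(BO)=\pi_5(BO)=0$ and $H^4(M^5;\Z)=0$ to kill every possible obstruction. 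Both are valid; your approach is more self-contained and does not lean on Barden's immersion lemma, while the paper's is shorter because it offloads the bundle-theoretic work to Hirsch and Barden. On the embedding half the two arguments coincide: both reduce to $\hat\chi_{\Z/2}(M^5)=0$ via Theorem~\ref{Theorem AudinEven}(B) and then appeal to the span-of-manifolds theorem identifying the Kervaire semi-characteristic as the sole remaining obstruction to full parallelizability for a stably parallelizable $(4k+1)$-manifold; you attribute this to E.~Thomas, the paper cites Kervaire \cite{[Kervaire1]}, but these are references to essentially the same result and either suffices for $N=5$.
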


\begin{proof} A result of Hirsch states that an $n$-manifold $M$ is stably parallelizable if and only if $M$ is orientable and it immerses into $\R^{n + 1}$ \cite{[Hirsch1]} (cf. \cite[Section 1.1.3]{[Gromov2]}). Barden has shown a closed simply connected 5-manifold admits an immersion into $\R^6$ if and only if its second Stiefel-Whitney class vanishes \cite[Lemma 2.4]{[Barden]}. The existence of the totally real immersion now follows from Corollary \ref{Corollary Stably}.

Item (B) of Theorem \ref{Theorem AudinEven} implies that there exists a totally real embedding of $M^5$ into $\C^5$ if and only if the Kervaire semi-characteristic $\hat{\chi}_{\Z/2}(M^5) = 0$. Kervaire has shown in \cite{[Kervaire1]} that the only obstruction for a stably parallelizable odd-dimensional manifold to be parallelizable, is the vanishing of the Kervaire semi-characteristic provided that the dimension is not one, three, nor seven.

\end{proof}

Smale's \cite{[Smale]} and Barden's \cite{[Barden]} classification of closed simply connected 5-manifolds implies that a manifold of Proposition \ref{Proposition 5D} is diffeomorphic to a connected sum\begin{equation}\label{Prototype}S^5\#(k - 1)(S^2\times S^3)\# (k_1 - 1)M_{p_1^k}\#\cdots \#(k_i - 1)M_{p_i^k}\end{equation}for $k, k_1, \ldots, k_j\in \N$, where each manifold $M_{p_i^k}$ has $H^2(M_{p_i^k}; \Z) = \Z/p_i^k\oplus \Z/p_i^k$ and $w_2(M_{p_i^k}) = 0$; see Table \ref{Table 5-manifolds}.  In particular, the manifold (\ref{Prototype}) admits a totally real embedding into complex 5-space if and only if $k$ is a positive even number.

Appealing to another classification result due to Smale \cite{[Smale]} we obtain the following six-dimensional examples.

\begin{proposition}\label{Example 6D} Every closed smooth 2-connected 6-manifold $M^6$ admits a totally real immersion\begin{equation}\label{ExampleImmersion6D}M^6\rightarrow \C^6\end{equation}and there is a totally real embedding\begin{equation}\label{ExampleEmbedding6D}M^6\hookrightarrow \C^6\end{equation} if and only if $M^6$ is diffeomorphic to $S^3\times S^3$. 

\end{proposition}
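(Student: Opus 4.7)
The plan is to reduce the statement to a small list of model manifolds via a classification theorem of Smale, and then apply the general existence results assembled in Section \ref{Section TRI} together with Theorem \ref{Theorem ConnectedSums} and Theorem \ref{Theorem AudinEven}.

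First I would invoke Smale's classification of closed smooth 2-connected 6-manifolds, which asserts that such an $M^6$ is diffeomorphic either to $S^6$ or to an iterated connected sum $\#_k(S^3\times S^3)$ for some $k\geq 1$. In fact Hurewicz and Poincar\'e duality force $H_\ast(M^6;\Z)$ to agree with one of these models, and Smale's $h$-cobordism-type argument upgrades this to a diffeomorphism. Granted this classification, the statement reduces to analyzing these two families.

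For the immersion assertion, note that $S^6$ is stably parallelizable and $S^3\times S^3$ is parallelizable (as $S^3$ is a Lie group), so each of the model pieces admits a totally real immersion into $\C^6$ by Corollary \ref{Corollary Stably}. To pass to connected sums I would appeal to Theorem \ref{Theorem ConnectedSums}; the orientation issue is harmless, since an orientation-reversing self-diffeomorphism of $S^3$ induces one on $S^3\times S^3$, so $\overline{S^3\times S^3}\cong S^3\times S^3$ and hence $\#_k(S^3\times S^3)$ can be realized as a sequence of connected sums in the form required by the theorem.

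For the embedding assertion I would quote Theorem \ref{Theorem AudinEven}(A), which, in even dimension and in the presence of a totally real immersion, identifies the vanishing of the Euler characteristic as the precise obstruction. A direct computation gives $\chi(S^6)=2$, $\chi(S^3\times S^3)=0$, and
\begin{equation*}
\chi\bigl(\#_k(S^3\times S^3)\bigr)=k\cdot 0-(k-1)\chi(S^6)=-2(k-1),
\end{equation*}
which vanishes exactly when $k=1$. Thus the only 2-connected 6-manifold that clears the Euler-characteristic hurdle is $S^3\times S^3$, giving the claimed ``if and only if''. The only step with any real content is the appeal to Smale's diffeomorphism classification; everything else is a direct application of results already established in the excerpt, and I expect no further obstacle beyond confirming that the orientation conventions in Theorem \ref{Theorem ConnectedSums} are satisfied by the model pieces.
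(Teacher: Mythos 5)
Your argument is correct and reaches the same conclusion via the same classification input, but the immersion half takes a slightly different route than the paper. Where you decompose $\#_k(S^3\times S^3)$ into iterated connected sums and repeatedly invoke Theorem \ref{Theorem ConnectedSums} (together with the observation that $\overline{S^3\times S^3}\cong S^3\times S^3$ to dispose of the orientation-reversal built into that theorem), the paper simply observes that the connected sum of stably parallelizable manifolds is again stably parallelizable, so $\#_k(S^3\times S^3)$ is itself stably parallelizable and Corollary \ref{Corollary Stably} applies directly to the whole manifold. The paper's route is a bit lighter: it avoids Theorem \ref{Theorem ConnectedSums} entirely, and thus avoids the need to check orientation conventions, which is what costs you the extra sentence. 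Your route has the mild virtue of illustrating Theorem \ref{Theorem ConnectedSums} on a concrete example, but it uses a sledgehammer (a theorem proved by a surgery/cobordism argument earlier in the paper) where an elementary fact about stable parallelizability suffices. The embedding half is identical in both: Theorem \ref{Theorem AudinEven}(A) reduces the question to $\chi=0$, and your Euler-characteristic computation $\chi(\#_k(S^3\times S^3)) = -2(k-1)$, together with $\chi(S^6)=2$, correctly singles out $S^3\times S^3$.
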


\begin{proof} Smale \cite{[Smale]} has shown that a closed smooth 2-connected 6-manifold $M$ is diffeomorphic to $S^6$ or to a connected sum $n(S^3\times S^3)$ of $n$ copies of the product of two 3-spheres for $n\in \N$. Since the connected sum of two stably parallelizable manifolds is stably parallelizable, it follows that every closed smooth 2-connected 6-manifold is stably parallelizable. Corollary \ref{Corollary Stably} implies that the totally real immersion (\ref{ExampleImmersion6D}) exists for every such 6-manifold. Theorem \ref{Theorem AudinEven} and Smale's cited classification result imply that there is a totally real embedding (\ref{ExampleEmbedding6D}) if and only if $M^6$ is diffeomorphic to $S^3\times S^3$. We point out that Ahern-Rudin have given an explicit construction of a totally real embedding $S^3\hookrightarrow \C^3$ \cite{[AhernRudin]} that can be used to construct a totally real embedding of the product of two 3-spheres into $\C^6$ (cf. \cite[6.2.3 Remarque]{[Audin]}). 

\end{proof}

\section{Proofs}

\subsection{Proof of Theorems \ref{Theorem Surgeries} and Theorem \ref{Theorem ConnectedSums}}\label{Section SurgeryConstructions}We first show that the manifold\begin{equation}\hat{M}^N:= M^N \backslash \imath(S^2\times D^{N - 2})\cup (D^{3}\times S^{N - 3})\end{equation}that is obtained by performing surgery along $\imath(S^2\times \{0\})$ admits a totally real immersion $\hat{M}^N\rightarrow \C^N$ for clarity purposes, and then discuss the corresponding generalization to the cases $p = 0, 1$. Theorem \ref{Theorem TRImmersions} states that we need to show that $\C T{\hat{M}}^N$ is trivial. Set $S:= \imath(S^2\times\{0\})$ and let $\{e_1,e_2,e_3,\ldots,e_N\}$ be sections that trivialize the bundle $\C TM^N$. We can assume that along the 2-sphere $S$, the elements $\{e_1,e_2\}$ give a trivialization of $\C TS$ and the elements $\{e_3,\ldots,e_N\}$ trivialize the normal bundle $NS$ of $S$. This is justified by the following argument. Let $\{f_1,f_2,f_3,\ldots,f_N\}$ be nowhere zero sections of $\C TM^N|_S$, defined over $S$, so that $\{f_1,f_2\}$ trivialize $\C TS$ and $\{f_3,\ldots,f_N\}$ trivialize $NS$. There exists a map\begin{equation}A:S\to \Gl(N,\C),\end{equation} so that $f_i = A e_i$ for $i=1,2,\ldots,n$. Since $\pi_2(\Gl(N,\C))=0$, there is a homotopy $A_t$, so that $A_t=A$ near $t=0$ and $A_t=I$ near $t=1$. We can use this homotopy to connect the trivialization $\{f_1,f_2,f_3,\ldots,f_N\}$ over $S$  with the trivialization $\{e_1,e_2,e_3,\ldots,e_N\}$ outside a neighborhood of $S$. 

Let now $W$ be the standard cobordism between $M^N$ and $\hat{M}^N$ that is obtained by attaching an $N + 1$-dimensional $3$-handle $H:= D^3\times D^{N - 2}$ to $M^N\times [0,1]$ along $M^N\times \{1\}$ via the gluing map $i_{S^2\times D^{N - 2}}\times \{1\}.$ We proceed to show that $\C TW$ is a trivial bundle. Let $e$ be inward normal vector field to the boundary sphere $S = S^2\times\{0\}$ in the core $D:= D^3\times \{0\}$ of the handle. Since $\pi_2(\Gl(3,\C))=0$, we can extend $\{e,e_1,e_3\}$ to a trivialization of $\C TD$. We can also trivially extend $\{e_3,\ldots,e_N\}$ to a nonzero trivialization of the normal bundle of $D$ in the handle $H$; recall $\pi_2(\OO (N - 2)) = 0$ and there are no framing issues attaching $3$-handles. Since $M\cup D$ is a deformation retract of $W$,  $\{e, e_1,e_2,e_3,\ldots,e_N\}$ can be extended to give a trivialization of $\C TW$. Since along $\hat{M}^N = \partial_+W$, the tangent bundle $TW$ is a Whitney sum $T\hat{M}^N\oplus \epsilon^1$ with a trivial line bundle $\epsilon^1$, we conclude that $\C T\hat{M}^N\oplus (\epsilon^1 \otimes \C)$ is trivial. Since $\C T\hat{M}^N$ is stably trivial, then it is a trivial bundle \cite [Lemma 1.2]{[JL]}.

\begin{remark}\label{Remark Framing} Small tweaks to the proof of Theorem \ref{Theorem Surgeries} yield the same conclusion for surgeries performed to $M^N$ along \begin{equation}S^p\times D^{n-p}\hookrightarrow M^n\end{equation} for $p = 0, 1$ whose normal bundle over the handle is trivial. In these cases there are two choices of framings $\pi_0(\OO (N))=\pi_1(\OO (N - 1))=\Z/2$ (except for $N = 3$ when $\pi_1(\OO (2))=\Z$). For exactly one choice of framing, we can extend the trivialisation of the normal bundle from the boundary of the core of the handle to its core as in the previous proof. 
\end{remark} 

The above remark in the case $p=0$ yields Theorem \ref{Theorem ConnectedSums} as a corollary.

\subsection{Proof of Theorem \ref{Theorem 5D}}\label{Section Proof5D} The case of 5-manifolds with vanishing second Stiefel-Whitney class was settled in Proposition \ref{Proposition 5D}. The classification results of closed simply connected 5-manifolds up to diffeomorphism of Barden \cite{[Barden]} and Smale \cite{[Smale]} imply that any such 5-manifold is difeomorphic to a connected sum of manifolds in Table \ref{Table 5-manifolds}. We proceed to argue that every manifold in the table has trivial complexified tangent bundle and then invoke Theorem \ref{Theorem ConnectedSums} and Theorem \ref{Theorem TRImmersions} in order to prove the first part of Theorem \ref{Theorem 5D}.  Therefore, we need to show that the Wu manifold $\Su(3)/\So(3)$, the nontrivial bundle $S^3\widetilde{\times} S^2$, and the manifold $X_k$ with $k$ have trivial complexified tangent bundle. Audin has shown that the Wu manifold admits a totally real embedding into $\C^5$ \cite[Proposition 0.8]{[Audin]}. The same conclusion holds for $S^3\widetilde{\times} S^2$ by Proposition \ref{Proposition GBundle} since it is the total space of a circle bundle\begin{equation}S^1\hookrightarrow S^3\widetilde{\times} S^2\rightarrow \mathbb{CP}^2\#\overline{\mathbb{CP}^2},\end{equation} where the base 4-manifold has trivial complexified tangent bundle by \cite[Corollary 4.1]{[JL2]} given that its first Pontrjagin class is zero. We now show that $\C TX_k$ is a trivial bundle for every value $k\in \N$. Let $S\hookrightarrow S^3\widetilde{\times} S^2$ be a 2-sphere that represents $2^k$-times the generator of the infinite cyclic group $H_2(S^3\widetilde{\times} S^2; \Z)$. Notice that the normal bundle of $S$ is trivial. The manifold $X_k$ is obtained from $S^3\widetilde{\times} S^2$ by performing the surgery of Theorem \ref{Theorem Surgeries} along $S$ and therefore $\C TX_k$ is trivial. The claim regarding totally real embeddings into complex 5-space follows immediately from Theorem \ref{Theorem AudinEven}.

\hfill $\square$

\begin{remark} Every closed simply connected 5-manifold is diffeomorphic to a connected sum of manifolds that are listed in Table \ref{Table 5-manifolds}. Such a 5-manifold admits a totally real embedding into complex 5-space if and only if it is diffeomorphic to\begin{equation}S^5\# \delta (S^3\widetilde{\times}S^2)\#(k_1 - 1)(S^2\times S^3)\#(k_2 - 1)(\Su(3)/\So(3))\#M\#N\end{equation}

for\begin{equation}\delta + k_1 + k_2 + - 1 = 0\end{equation}where $\delta\in \{0, 1\}$, $k_1, k_2\in \N$, the manifold $M$ is a connected sum of an arbitrary number of copies of $M_p^k$ and $N$ is a connected sum of an arbitrary number of copies of the manifold $X_k$ of Table \ref{Table 5-manifolds}. If the second Stiefel-Whitney class is zero, the explicit diffeomorphism type is given in (\ref{Prototype}).
\end{remark}

\begin{table}
\begin{tabular}{|l|c|r|r|} \hline 
5-manifold $Y$ & $H_2(Y; \Z)$ & $w_2(Y)$ & $\hat{\chi}_{\Z/2}(Y) $\\ \hline \hline
   $S^5$ & 0 & 0  & 1 \\ \hline
  $S^2\times S^3$ & $\Z$ & 0  & 0\\ \hline
   $M_{p^k}$ & $\Z/p^k\oplus \Z/p^k$ & 0  & 1\\ \hline
     $\Su(3)/\So(3)$ & $\Z/2$ & $\neq 0$  & 0\\ \hline
    $S^3\widetilde{\times} S^2$ & $\Z$ & $\neq 0$ & 0\\ \hline
 $X_{k}$ & $\Z/2^k\oplus \Z/2^k$ & $\neq 0$ & 1 \\ \hline

\end{tabular}

\caption{Building blocks of simply connected 5-manifolds ($k\in \N$)}
\label{Table 5-manifolds}
\end{table}

\subsection{Proof of Corollary \ref{Corollary 5D}}\label{Section Corollary5D} A 5-manifold that satisfies the hypothesis of the corollary is diffeomorphic to $S^5$ or to a connected sum of copies of $S^2\times S^3$ if its second Stiefel-Whitney class is zero and to a connected sum of $S^3\widetilde{\times} S^2$ with copies of $S^2\times S^3$ if its second Stiefel-Whitney class is not zero according to Barden and Smale aforementioned classification results. Using a Mayer-Vietoris sequence and the universal coefficients theorem \cite{[Hatcher]}, it is immediate to compute\begin{equation}\hat{\chi}_{\Z/2}(S^3\widetilde{\times} S^2\# (n - 1)(S^2\times S^3)) = n + 1 \Mod 2\end{equation}and\begin{equation}\hat{\chi}_{\Z/2}(n(S^2\times S^3)) = n + 1 \Mod 2\end{equation}for $n\in \N$. Item (B) of Theorem \ref{Theorem AudinEven} implies that a sufficient and necessary condition for the totally real embedding to exists is for $n$ to be an odd natural number.

\subsection{Proof of Theorem \ref{Theorem 6D}}\label{Section Proof6D}Set $M:= M^6$ and suppose there exists such a totally real immersion into $\C^6$. The triviality of the complexified tangent bundle $\C TM$ implies that $c_2(\C TM) = 0$. By definition of the Pontrjagin classes\begin{equation}\label{Definition Pontrjagjn}p_i(M) = c_{2i}(\C TM),\end{equation} it follows that $p_1(M) = 0$ if and only if $c_2(\C TM) = 0$. A closed orientable 6-manifold $M$ without 2-torsion in $H^3(M; \Z)$ admits an almost-complex structure \cite[Proposition 8]{[OkonekVV]}, (cf. \cite[Section 7]{[Wall]}). The complexified tangent bundle of an almost-complex manifold $M$ has the canonical eigenspaces decomposition\begin{equation}\label{Isomorphism2}\C TM = T^{1, 0}M\oplus T^{0, 1}M,\end{equation}into the holomorphic $T^{1, 0}M$ and antiholomorphic $T^{0, 1}M$ tangent bundles of $M$. The Chern classes of these bundles satisfy the equality\begin{equation}\label{EqualityC1}c_i(T^{1, 0}M) = (-1)^i c_i(T^{0, 1}M).\end{equation}  Suppose now that $p_1(M) = 0$. To prove the converse, we claim\begin{equation}\label{Claim1}c_i(\C TM) = 0\end{equation} for $i\in \{1, 2, 3\}$. The conclusion of Theorem \ref{Theorem 6D} readily follows from (\ref{Claim1}) since a complex vector bundle of rank greater or equal to 3 over a closed orientable 6-manifold is trivial if and only if its Chern classes $\{c_1, c_2, c_3\}$ vanish \cite[p. 416]{[Peterson]}. We proceed to show that these characteristic classes are zero. From (\ref{Definition Pontrjagjn}), we see that our hypothesis implies $c_2(\C TM) = 0$. Let us now show that $c_1(\C TM)$ is zero. It follows from using the Whitney product formula for Chern classes and (\ref{Isomorphism2}) that\begin{equation}c_1(\C TM) = c_1(T^{1, 0}M\oplus T^{0, 1}M) = c_1(T^{1, 0}M) + c_1(T^{0, 1}M).\end{equation}Identity (\ref{EqualityC1}) implies $c_1(\C TM) = 0$. Similarly, the Whitney product formula and identity (\ref{EqualityC1}) imply\begin{equation}c_3(\C TM) = c_1(T^{1, 0}M)\cup c_2(T^{0, 1}M) + c_2(T^{1, 0}M)\cup c_1(T^{0, 1}M) = 0.\end{equation}We conclude that (\ref{Claim1}) holds and it follows that $\C TM$ is trivial. Theorem \ref{Theorem TRImmersions} implies the existence of the totally real immersion (\ref{6D TRImmersion}) as claimed.

The claims about the existence of a totally real embedding now follow from the first part of the theorem and Item (A) of Theorem \ref{Theorem AudinEven}.

\hfill $\square$


\subsection{Proof of Theorem \ref{Theorem G}: examples with arbitrary fundamental group}\label{Section ProofTheoremG} Classical results of Dehn \cite{[Dehn]} and Kevaire-Milnor \cite{[MilnorKervaire]} imply that for any $N\geq 4$ and any finitely presented group $G$ there exists a closed smooth stably parallelizable $N$-manifold $M^N(G)$ such that the fundamental group $\pi_1(M^N(G))$ is isomorphic to $G$. In particular, the complexified tangent bundle $\C T M^N(G)$ is trivial  and a result of Gromov \cite{[Gromov1]} implies that there is a totally real immersion $M^N(G)\rightarrow \C^N$ for every $N\geq 4$ as it is stated in Theorem \ref{Theorem TRImmersions}. We now proceed to show the existence of a totally real embedding into $\C^N$. For the values $N = 4 + 2(k_1 - 1) + 3k_2$ with $k_1, k_2\in \N$ the claim follows immediately by invoking Proposition \ref{Proposition Audin} on the product of the 4-manifold $M^4(G)$ with $(k_1 - 1)$ copies of $S^2$ and $k_2$ copies of $S^3$. We apply Theorem \ref{Theorem AudinEven} to deal with the cases $N$ even and $N = 4 k + 1$. A standard computation using a Mayer-Vietoris sequence and the universal coefficients theorem (see \cite{[Hatcher]}) implies that either $\hat{\chi}_{\Z/2}(M^5(G)) = 0$ or $\hat{\chi}_{\Z/2}(M^5(G)\# S^2\times S^3) = 0$. Since $\pi_1(M^5(G)) = \pi_1(M^5(G)\# S^2\times S^3)$, we conclude that the claim for $N = 5$ holds. Using product manifolds as before, we conclude that the claim regarding the existence of a totally real embedding holds for odd $N$. For even values of $N$, the argument is similar. A Mayer-Vietoris sequence reveals that the Euler characteristic of $M^N(G)$ is an even number. Taking connected sums of $M^N(G)$ with copies of $S^3\times S^{N - 3}$ and $S^2\times S^{N - 2}$, one obtains a manifold with fundamental group $G$ and zero Euler characteristic, which we continue to call $M^N(G)$.

\hfill $\square$

\begin{remark} Johnson-Walton \cite[Theorem A]{[JW]} pointed out that work of Kervaire  \cite{[Kervaire1], [Kervaire2]} implies that the manifolds of Theorem \ref{Theorem G} are parallelizable.\end{remark}

\section{Examples of generic immersions}\label{Section Generic}

In this last section, we mention some examples of the following kind of immersions.

\begin{definition}\label{Definition Generic}Let $n$ be a nonnegative integer and $k\in \N$. An immersion\begin{equation}\label{EquationGeneric}\pi: M^{2n + k}\rightarrow \C^{n + k}\end{equation}is said to be generic if at each point $p\in M$ the real vector space\begin{equation}\pi_{\ast}TM\cap J\pi_{\ast}M\end{equation}has dimension $2n$.
\end{definition} 

Definition \ref{Definition Generic} recovers the notion of totally real immersions of Definition \ref{Definition TotallyReal} for the values $(n, k) = (0, N)$. A result of Hirsch \cite{[Hirsch1]} states that the bundle $TM^{2n + 1}\oplus \epsilon$ is trivial if and only if $M^{2n + 1}$ immerses in $\R^{2n + 2}$. Since every real hypersurface in $\C^{n + 1}$ is automatically generic, any closed stably parallelizable $(2n + 1)$-manifold has a generic immersion into  $\C^{n + 1}.$ (cf. \cite[Remark 4]{[JL]}). This yields the following examples of generic immersions.

\begin{corollary}\label{Corollary GenericImmer} A closed simply connected 5-manifold has a generic immersion into $\C^3$ if and only if its second Stiefel-Whitney class is zero, i.e., if the 5-manifold is stably parallelizable. 

For every $n\geq 2$ and every finitely presented group $G$ there is a generic immersion\begin{equation}M^{2n + 1}(G)\rightarrow \C^{n + 1}\end{equation} of a closed smooth orientable $(2n + 1)$-manifold $M^{2n + 1}(G)$ with fundamental group isomorphic to $G$. 
\end{corollary}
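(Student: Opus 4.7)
The plan is to combine two observations already recorded just before the corollary: (i) by Hirsch \cite{[Hirsch1]}, a closed $(2n + 1)$-manifold $M^{2n + 1}$ immerses into $\R^{2n + 2}$ if and only if $TM^{2n + 1}\oplus \epsilon^1$ is trivial, and (ii) every immersion $M^{2n + 1}\rightarrow \C^{n + 1}$ that realises $M^{2n + 1}$ as a real hypersurface is automatically generic in the sense of Definition \ref{Definition Generic}. Observation (ii) is the only mildly non-trivial point and follows from a linear algebra dimension count: a real hyperplane $H\subset \C^{n + 1}$ of real dimension $2n + 1$ satisfies $\Dim_{\R}(H + JH)\leq 2n + 2$, forcing $H\cap JH$ to have real dimension at least $2n$; since $H\cap JH$ is $J$-invariant its real dimension is also even and bounded above by $2n + 1$, hence exactly $2n$.

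For the first statement, the backward implication is immediate: if $w_2(M^5) = 0$ then \cite[Lemma 2.4]{[Barden]} (already invoked in the proof of Proposition \ref{Proposition 5D}) produces an immersion $M^5\rightarrow \R^6 = \C^3$, and (ii) makes it generic. For the forward direction, any generic immersion $M^5\rightarrow \C^3$ is in particular a smooth immersion into $\R^6$, whose rank-one normal bundle is trivial because $M^5$ is simply connected hence orientable; consequently $TM^5\oplus \epsilon^1$ is trivial and therefore $w_2(M^5) = 0$.

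For the second statement, I would invoke the Dehn and Kervaire-Milnor construction used in the proof of Theorem \ref{Theorem G} to produce, for every $n\geq 2$ and every finitely presented group $G$, a closed smooth stably parallelizable $(2n + 1)$-manifold $M^{2n + 1}(G)$ with fundamental group isomorphic to $G$. By Hirsch this manifold immerses in $\R^{2n + 2}$, and viewing the codomain as $\C^{n + 1}$ exhibits this as a hypersurface immersion, which is generic by (ii).

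The argument is thus largely a matter of assembling results already at hand: the only conceptual input is the dimension count underlying (ii), while both directions of the first statement and the construction in the second rely only on Hirsch's immersion theorem and the standard fact that an orientable real line bundle is trivial.
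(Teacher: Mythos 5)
Your proposal is correct and follows essentially the same route as the paper: Hirsch's immersion theorem, Barden's lemma for the 5-dimensional case, the automatic genericity of real hypersurfaces in $\C^{n+1}$, and the Dehn/Kervaire--Milnor construction from Theorem \ref{Theorem G} for the second part. You have simply filled in two details the paper leaves implicit, namely the linear-algebra dimension count showing a real hyperplane $H\subset\C^{n+1}$ satisfies $\Dim_{\R}(H\cap JH)=2n$, and the forward implication of the first statement (a generic immersion $M^5\to\C^3$ gives a codimension-one immersion into $\R^6$ with trivial normal line bundle since $H^1(M^5;\Z/2)=0$, hence stable parallelizability and $w_2=0$).
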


\end{document}